\newcommand{\pn}{\par\noindent}
\newcommand{\pmn}{\par\medskip\noindent}
\newtheorem{theor}{Theorem}[section]
\newtheorem{prop}{Proposition}[section]
\theoremstyle{definition} 
 \theoremstyle{remark}
\newtheorem{rem}{Remark}[section]
\begin{document}
\title{Affine transformations of circle and sphere}
\author{Irina Busjatskaja \and Yury Kochetkov}
\begin{abstract} A non-degenerate two-dimensional linear
operator $\varphi$ transforms the unit circle into ellipse. Let
$p$ be the length of its bigger axis and $q$ --- the smaller. We
can define the deformation coefficient $k(\varphi)$ as $q/p$.
Analogously, if $\varphi$ is a non-degenerate three-dimensional
operator, then it transforms the unit sphere into ellipsoid. If
$p>q>r$ are lengths of its axes, then deformation coefficient
$k(\varphi)$ will be defined as $r/p$. In this work we compute the
mean value of deformation coefficient in two-dimensional case and
give an estimation of the mean value in three-dimensional case.
\end{abstract} \email{ibusjatskaja@hse.ru, yukochetkov@hse.ru}
\maketitle
\section{Introduction}
\pn This work is a continuation of the work \cite{BK}, where we
study the deformation of angles under the action of a linear
operator in $\mathbb{R}^2$. Here we study the deformation of the
unit circle and also made some comments about three-dimensional
case. \pmn Let $\varphi$ be a non-degenerate linear operator in
$\mathbb{R}^2$ and
$$A=\begin{pmatrix}a&b\\c&d\end{pmatrix},\quad ad-bc\neq 0$$ be
its matrix in standard base. Operator $\varphi$ transforms unit
circle $C$ into ellipse with canonical equation
$$\frac{x'^2}{p^2}+\frac{y'^2}{q^2}=1,\quad
p\geqslant q$$ in appropriate coordinate system $(x',y')$. The
number $q/p\leqslant 1$ will be called the deformation coefficient
$k(\varphi)$ of operator $\varphi$. \pmn In Section 2 we compute
the deformation coefficient $k(\varphi)$. \pmn \textbf{Theorem
2.1.}
$$k(\varphi)=\frac{\sqrt{a^2+b^2+c^2+d^2-\sqrt{(a^2+b^2+c^2+d^2)^2
-4(ad-bc)^2}}}{\sqrt{a^2+b^2+c^2+d^2+\sqrt{(a^2+b^2+c^2+d^2)^2
-4(ad-bc)^2}}}.$$ \pmn In Section 3 compute the mean value
$\overline{k}_2$ of coefficients $k(\varphi)$. \pmn
\textbf{Theorem 3.1.} $\overline{k}_2=3-4\ln(2)$. \pmn In Section
4 we demonstrate how to obtain a coarse upper bound for
$\overline{k}_2$: $\overline{k}_2<\frac 12$ (Theorem 4.1) with the
aim to generalize this result to three dimensional case:
$\overline{k}_3<\frac 13$ (Theorem 5.1).

\section{Deformation coefficient in two-dimensional case}
\pn Let $\varphi:\mathbb{R}^2\to\mathbb{R}^2$ be a non-degenerate
linear operator and
$$A=\begin{pmatrix} a&b\\c&d \end{pmatrix}$$ be its matrix
in the standard base. Operator $\varphi$ transforms unit
circle into ellipse with axes $p$ and $q$, $p>q$.
\begin{theor}
$$k(\varphi)=\frac qp=\sqrt{\frac{a^2+b^2+c^2+d^2-
\sqrt{(a^2+b^2+c^2+d^2)^2-
4(ad-bc)^2}}{a^2+b^2+c^2+d^2+\sqrt{(a^2+b^2+c^2+d^2)^2-
4(ad-bc)^2}}}.$$ \end{theor}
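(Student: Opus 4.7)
The plan is to identify $p^2$ and $q^2$ as the eigenvalues of the symmetric positive-definite matrix $A^T A$ (equivalently, $p$ and $q$ are the singular values of $A$), and then to compute those eigenvalues explicitly via the quadratic formula using the trace and the determinant.

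First I would justify this identification without appealing to the SVD. A point on the image ellipse has position vector $\varphi(v)$ for some unit vector $v$, and its squared distance to the origin is
\[
\|\varphi(v)\|^{2}=\langle Av,Av\rangle=\langle A^{T}A\,v,v\rangle.
\]
The semi-axes $p^{2}$ and $q^{2}$ are the maximum and minimum of this quadratic form on the unit circle, which by the Rayleigh quotient (or by a direct Lagrange multiplier argument on $v=(\cos t,\sin t)$) are exactly the two eigenvalues of $A^{T}A$.

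Next I would compute
\[
A^{T}A=\begin{pmatrix}a^{2}+c^{2}&ab+cd\\ab+cd&b^{2}+d^{2}\end{pmatrix},
\]
so that $\mathrm{tr}(A^{T}A)=a^{2}+b^{2}+c^{2}+d^{2}$ and $\det(A^{T}A)=(\det A)^{2}=(ad-bc)^{2}$. Therefore $p^{2}$ and $q^{2}$ are the two roots of
\[
\lambda^{2}-(a^{2}+b^{2}+c^{2}+d^{2})\lambda+(ad-bc)^{2}=0,
\]
namely
\[
p^{2},q^{2}=\frac{(a^{2}+b^{2}+c^{2}+d^{2})\pm\sqrt{(a^{2}+b^{2}+c^{2}+d^{2})^{2}-4(ad-bc)^{2}}}{2},
\]
with the $+$ sign for $p^{2}$ and the $-$ sign for $q^{2}$; the discriminant is nonnegative because $p^{2}+q^{2}\geq 2pq$.

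Finally, taking the ratio $q^{2}/p^{2}$ cancels the factor $1/2$ and yields the claimed closed form for $k(\varphi)=q/p$. The only step requiring any care is the first one — confirming that the extrema of $\|\varphi(v)\|$ on the unit circle are achieved along the principal axes of the image ellipse and correspond to eigenvectors of $A^{T}A$; everything else is routine algebra.
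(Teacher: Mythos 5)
Your proposal is correct and follows essentially the same route as the paper: identify $p^2$ and $q^2$ as the eigenvalues of $A^tA$ and extract them from its characteristic polynomial via trace and determinant. The only difference is that you explicitly justify the identification through the Rayleigh quotient for the quadratic form $\langle A^tA\,v,v\rangle$, a step the paper simply asserts.
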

\begin{proof} If $\varphi^*$ is the conjugate operator, then $A^t$
is its matrix in the standard base. $p^2$ and $q^2$ are
eigenvalues of operator $\varphi^*\varphi$ with matrix
$$A^tA=\begin{pmatrix} a^2+c^2& ab+cd\\ ab+cd&
b^2+d^2\end{pmatrix}.$$ Thus, $p^2$ and $q^2$ are roots of $A^tA$
characteristic polynomial
$$s(x)=x^2-(a^2+b^2+c^2+d^2)x+(ad-bc)^2:$$
$$p=\sqrt{\frac{a^2+b^2+c^2+d^2+\sqrt{(a^2+b^2+c^2+d^2)^2-
4(ad-bc)^2}}{2}};$$
$$q=\sqrt{\frac{a^2+b^2+c^2+d^2-\sqrt{(a^2+b^2+c^2+d^2)^2-
4(ad-bc)^2}}{2}}.$$
\end{proof} \pmn The change of
variables simplifies these formulas. Put
$$a:=x+y,\, d:=x-y,\, b:=z+t,\, c:=z-t.$$ In new variables
$$k(\varphi)=\sqrt{\frac{x^2+y^2+z^2+t^2-\sqrt{(x^2+y^2+z^2+t^2)^2
-(x^2+z^2-y^2-t^2)^2}} {x^2+y^2+z^2+t^2-\sqrt{(x^2+y^2+z^2+t^2)^2
-(x^2+z^2-y^2-t^2)^2}}}.$$ The next change of variables
$$x:=r\sin(\alpha),\,z:=r\cos(\alpha),\,y:=\rho\sin(\beta),\,
t:=\rho\cos(\beta)$$ allows us the further simplification:
$$k(\varphi)=\frac{|r-\rho|}{r+\rho}.$$

\section{Computation of the mean value}

\begin{theor} $\overline{k}_2=3-4\ln(2)$. \end{theor}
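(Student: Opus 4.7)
The plan is to build on the end-of-Section-2 simplification $k(\varphi) = |r-\rho|/(r+\rho)$, which depends only on the radii of the polar decompositions of $(x,t)$ and $(y,z)$, and is independent of the angular variables $\alpha,\beta$. The angles therefore integrate trivially against any rotationally-symmetric averaging measure on matrix space, and the problem reduces to integrating $k$ against the induced marginal measure on $(r,\rho)$. Under the double polar substitution, Lebesgue measure on $(x,y,z,t)$ contributes the Jacobian factor $r\rho$; taking the natural compact normalization domain where $(x,t)$ and $(y,z)$ each lie in the unit disk (equivalently $r,\rho \in [0,1]$) reduces the mean value to
$$\overline{k}_2 \;=\; \frac{\displaystyle\int_0^1\int_0^1 \frac{|r-\rho|}{r+\rho}\, r\rho\, dr\, d\rho}{\displaystyle\int_0^1\int_0^1 r\rho\, dr\, d\rho}.$$

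The denominator is $1/4$. For the numerator I would exploit the $r \leftrightarrow \rho$ symmetry to reduce it to $2\int_0^1 r \bigl(\int_0^r (r-\rho)\rho/(r+\rho)\, d\rho\bigr)\,dr$, and then apply the substitution $u = r+\rho$ (with $r$ fixed) inside the inner integral. The integrand $(r-\rho)\rho/(r+\rho)$ transforms into $(2r-u)(u-r)/u = 3r - u - 2r^2/u$, whose antiderivative $3ru - u^2/2 - 2r^2\ln u$, evaluated from $u = r$ to $u = 2r$, collapses to $r^2\bigl(3/2 - 2\ln 2\bigr)$; the $\ln 2$ arises precisely from the $-2r^2/u$ term. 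Inserting this back, multiplying by the outer factor $r$, and integrating over $r \in [0,1]$ gives numerator $= 3/4 - \ln 2$, so $\overline{k}_2 = 3 - 4\ln 2$.

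The computation itself is elementary; the main conceptual obstacle is fixing the correct averaging measure. The weight $r\rho$ (the polar Jacobian) on the bidisc $\{r,\rho \in [0,1]\}$ is essential --- for instance, unweighted uniform measure on $(r,\rho) \in [0,1]^2$ would produce $2\ln 2 - 1$ instead, and uniform measure on the 3-sphere $a^2+b^2+c^2+d^2 = 1$ would produce $1-\ln 2$. The one algebraic trick that unlocks the calculation is the substitution $u = r+\rho$, which converts the rational integrand into a polynomial-plus-reciprocal form in $u$ and makes the $\ln 2$ emerge naturally.
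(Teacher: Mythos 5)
Your proposal is correct and follows essentially the same route as the paper: both reduce to the formula $k=|r-\rho|/(r+\rho)$ with the polar Jacobian weight $r\rho$, and your inner integral (via $u=r+\rho$) is exactly the paper's antiderivative $-\tfrac12\rho^2+2\rho r-2r^2\ln(r+\rho)$, yielding $r^2(\tfrac32-2\ln 2)$. The only cosmetic difference is the normalization region --- you average over the bidisc $r,\rho\in[0,1]$ with $|r-\rho|$, while the paper takes $R\to\infty$ over $\{0<\rho<r<R\}$ --- and these agree by the degree-zero homogeneity of $k$ together with the $r\leftrightarrow\rho$ symmetry.
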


\begin{proof}
Without loss of generality we can assume that $|A|>0$, i.e.
$ad-bc>0$. In new variables this condition can be rewritten as
$$x^2+z^2-y^2-t^2>0\text{ or } r>\rho.$$ We will compute the
mean value of $k(\varphi)$ in the domain $ad-bc>0$, i.e. in the
domain
$$r>\rho>0,\, 0\leqslant\alpha\leqslant 2\pi,\,
0\leqslant\beta\leqslant 2\pi.$$ We have
$$
\overline{k}_2=\lim_{R\to\infty}\left(4\pi^2\int_0^R r\,dr\int_0^r
\rho\,\frac{r-\rho}{r+\rho}\, d\rho \left/ 4\pi^2\int_0^R r\,dr
\int_0^r \rho\,d\rho\right.\right)=$$ $$ =\lim_{R\to\infty}\left(
\int_0^R \left.\left(-\frac 12\, \rho^2+2\rho r-2r^2\ln(r+\rho)
\right)\right|_0^r r\,dr \left/ -\frac 18\, r^4\right.\right)=$$
\par\bigskip
$$=3-4\ln 2\approx 0.227411278.$$ \end{proof}

\section{Upper bound for deformation coefficient}
\pn Let $y$ and $z$, $y\geqslant z$, be lengths of vector-columns
of matrix $A$ and $S\leqslant yz$ be the area of parallelogram,
generated by these vectors. Characteristic polynomial of the
matrix $A^t A$ can be written in the following way:
$$s(x)=x^2-(y^2+z^2)\,x+S^2.$$ As
$$p^2,q^2=\frac{y^2+z^2\pm\sqrt{(y^2+z^2)^2-4S^2}}{2}\,,$$ then
$$q^2\leqslant z^2\leqslant y^2\leqslant p^2, \text{ and }\,
k(\varphi)=\frac qp\leqslant \frac zy\,.$$

\begin{theor} $\overline{k}_2<\frac 12$. \end{theor}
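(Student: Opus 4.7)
The plan is to sharpen the column-length bound $k\le z/y$ into a form independent of the angular variables $\alpha,\beta$ of Section~3, so that the mean reduces to an elementary one-dimensional integral. From the roots of the characteristic polynomial $s(x)=x^2-(y^2+z^2)x+S^2$ we have
\[
k^2=\frac{q^2}{p^2}=\frac{1-\sqrt{1-\xi}}{1+\sqrt{1-\xi}},\qquad \xi:=\frac{4S^2}{(y^2+z^2)^2}\in[0,1].
\]
Multiplying numerator and denominator by the conjugate $1+\sqrt{1-\xi}$ turns this into $k^2=\xi/(1+\sqrt{1-\xi})^2$, so $k=\sqrt\xi/(1+\sqrt{1-\xi})$; since the denominator is at least $1$, we obtain the sharper bound
\[
k(\varphi)\le\sqrt\xi=\frac{2S}{y^2+z^2}.
\]

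Next, I would express this bound in the variables of Section~3. The sum of squared column lengths coincides with the sum of squared matrix entries, $y^2+z^2=a^2+b^2+c^2+d^2$, and the change of variables of Section~2 makes this $2(r^2+\rho^2)$; analogously, $S=|ad-bc|=r^2-\rho^2$ on the half-space $r>\rho>0$ used in the proof of Theorem~3.1. The bound therefore simplifies to
\[
k(\varphi)\le\frac{r^2-\rho^2}{r^2+\rho^2},
\]
with no dependence on $\alpha,\beta$.

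Averaging against the measure $r\rho\,dr\,d\rho\,d\alpha\,d\beta$ from Theorem~3.1, the angular factors cancel between numerator and denominator. Setting $s=\rho/r$, one checks that $s$ has cumulative distribution $P(s\le t)=t^2$ on $[0,1]$ under this measure (hence density $2s$), so
\[
\overline{k}_2\le\int_0^1 2s\cdot\frac{1-s^2}{1+s^2}\,ds.
\]
The substitution $u=1+s^2$ reduces this to $\int_1^2 (2-u)/u\,du=2\ln 2-1\approx 0.386$, which is less than $\tfrac12$ (equivalently, $\ln 2<\tfrac34$).

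The main obstacle is spotting the algebraic manipulation that upgrades $k\le z/y$ to the angle-free bound $k\le 2S/(y^2+z^2)$. Averaging $z/y$ directly would instead require integrating
\[
\sqrt{\frac{r^2+\rho^2-2r\rho|\cos(\alpha+\beta)|}{r^2+\rho^2+2r\rho|\cos(\alpha+\beta)|}}
\]
over the angles, which is an elliptic integral rather than an elementary one.
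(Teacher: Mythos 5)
Your proof is correct, but it takes a genuinely different route from the paper's. The paper keeps the column-length bound $k(\varphi)\leqslant z/y$ and averages $z/y$ against the uniform measure $dy\,dz$ on the triangle $0<z<y<1$; this is a two-line computation and, crucially, is the template that transfers verbatim to the three-dimensional estimate $\overline{k}_3<\frac13$ in Theorem 5.1. Its weak point is that the uniform weighting of the ordered column lengths is not derived from the measure used to define $\overline{k}_2$ in Theorem 3.1 (the choice matters: weighting instead by $yz\,dy\,dz$, the natural density of two column lengths drawn from a disc, the same bound $z/y$ only yields $2/3$). You instead extract from the roots of $s(x)=x^2-(y^2+z^2)x+S^2$ the invariant bound $k(\varphi)\leqslant 2S/(y^2+z^2)=(r^2-\rho^2)/(r^2+\rho^2)$, which is free of the angles $\alpha,\beta$, and then average it with exactly the measure and limiting procedure of Theorem 3.1, obtaining $\overline{k}_2\leqslant 2\ln 2-1\approx 0.386<\frac12$; all the algebra and the integral check out, and the result is consistent with the exact value $3-4\ln 2\approx 0.227$. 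What your version buys is internal consistency with the paper's own definition of the mean; what it gives up is the painless generalization to dimension three. One small caveat: your opening claim that $2S/(y^2+z^2)$ ``sharpens'' $z/y$ is not true pointwise --- for nearly orthogonal columns $S\approx yz$ and $2yz/(y^2+z^2)\geqslant z/y$ when $y\geqslant z$ --- but since you derive the bound directly from the characteristic polynomial rather than from $z/y$, this does not affect the argument.
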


\begin{proof} We have
$$\overline{k}_2<\int_0^1 dy\int_0^y \frac zy\, dz \left/
\int_0^1 dy\int_0^y dz\right.=\frac 12\,.$$
\end{proof}

\section{Three-dimensional case}
\pn Let $A$ be the matrix of linear operator $\varphi:\mathbb{R}^3
\to\mathbb{R}^3$, $u,v,w$, $u\leqslant v\leqslant w$, be lengths
vector-columns of this matrix, $S_1,S_2,S_3$ be areas of
parallelograms, generated by pairs of vector-columns and $V$ be
the volume of parallelepiped, generated by vector-columns. Then
characteristic polynomial $s$ of the operator $\varphi^*\varphi$
is of form
$$s(x)=x^3-(u^2+v^2+w^2)\,x^2+(S_1^2+S_2^2+S_3^2)\,x-V^2.$$

\begin{prop} Let $x_1,x_2,x_3$, $x_1\leqslant x_2\leqslant x_3$,
be (real) roots of $s$. Then $x_1\leqslant u^2\leqslant w^2
\leqslant x_3$. \end{prop}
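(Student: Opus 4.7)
My plan is to avoid any direct sign analysis of the cubic $s$, and instead use the Rayleigh--Ritz (variational) characterization of the extreme eigenvalues of a symmetric operator. By construction $x_1\leqslant x_2\leqslant x_3$ are the eigenvalues of $A^tA$, which is the Gram operator of the quadratic form $Q(\xi)=\langle A^tA\,\xi,\xi\rangle=\|A\xi\|^2$ on $\mathbb{R}^3$. Because $\varphi$ is non-degenerate, $Q$ is positive definite, so all three roots are positive; in particular, this justifies the parenthetical ``(real)'' in the statement.

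The key step is then to apply the standard Rayleigh--Ritz identities
$$
x_1=\min_{\xi\neq 0}\frac{\|A\xi\|^2}{\|\xi\|^2},\qquad
x_3=\max_{\xi\neq 0}\frac{\|A\xi\|^2}{\|\xi\|^2},
$$
and simply test both expressions at the three standard basis vectors $e_1,e_2,e_3$. The point is that $\|A e_i\|$ is by definition the length of the $i$-th column of $A$, so each of the three column-length squares $u^2,v^2,w^2$ necessarily lies in the interval $[x_1,x_3]$. Since by hypothesis $u$ is the smallest and $w$ the largest column length, this immediately yields $x_1\leqslant u^2$ and $w^2\leqslant x_3$, which is exactly the proposition.

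I do not expect any real obstacle: once $x_1$ and $x_3$ are identified as the min and max of $\|A\xi\|^2/\|\xi\|^2$, evaluation at basis vectors finishes the argument in one line. The only thing worth stressing explicitly is the two-dimensional analogue already appearing in Section~4 (there the inequalities $q^2\leqslant z^2\leqslant y^2\leqslant p^2$ are simply stated), so one might want to record this variational proof in dimension three as the conceptual explanation for both cases simultaneously. An alternative route would be to plug $u^2$ and $w^2$ into $s$ and try to read off signs, but the resulting expressions are messier and one still needs extra input to locate $u^2,w^2$ in the correct sub-interval between consecutive roots; the variational approach is strictly cleaner.
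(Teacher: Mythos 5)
Your proof is correct, and it is genuinely different from the paper's: the paper offers no argument at all beyond the words ``computer assisted check.'' Your route — identifying $x_1$ and $x_3$ as the minimum and maximum of the Rayleigh quotient $\|A\xi\|^2/\|\xi\|^2$ for the positive semidefinite matrix $A^tA$ (whose characteristic polynomial is $s$), and then evaluating at the standard basis vectors so that $\|Ae_i\|^2$ is the squared length of the $i$-th column — is a complete one-line proof, and it buys considerably more than the paper's check does. It simultaneously justifies the unproved chain $q^2\leqslant z^2\leqslant y^2\leqslant p^2$ asserted in Section~4 (the two-dimensional case is the same computation with two basis vectors), it works in every dimension $n$ with no extra effort, and it removes any dependence on symbolic computation. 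Two minor points worth making explicit if you write this up: first, the reality of the roots follows already from the symmetry of $A^tA$ (positivity, which you invoke via non-degeneracy, is not needed for reality, only for strict positivity of the roots); second, you should say one sentence confirming that the polynomial $s$ given in the paper, with coefficients $u^2+v^2+w^2$, $S_1^2+S_2^2+S_3^2$, $V^2$, really is the characteristic polynomial of $A^tA$ — the paper asserts this, and it follows from Cauchy–Binet applied to the principal minors — so that the $x_i$ you are bounding are indeed its eigenvalues. With those remarks your argument is strictly cleaner than the alternative of evaluating $s$ at $u^2$ and $w^2$ and chasing signs, exactly as you anticipated.
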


\begin{proof} Computer assisted check. \end{proof} \pn Thus,
$$k(\varphi)=\frac{x_1}{x_3}\leqslant \frac uw\,,$$ and we have the
following estimation of the mean value $\overline{k}_3$ of
three-dimensional deformation coefficient.

\begin{theor} $\overline{k}_3<\frac 13$.\end{theor}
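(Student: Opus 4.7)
The plan is to mirror the proof of Theorem 4.1 in one higher dimension, using Proposition 5.1 as the pointwise bound on $k(\varphi)$. From the chain $x_1\le u^2\le w^2\le x_3$ provided by the proposition, together with the fact that $x_1=r^2$ and $x_3=p^2$ are the smallest and largest eigenvalues of $\varphi^*\varphi$, I would read off
$$k(\varphi)=\frac{r}{p}=\sqrt{\frac{x_1}{x_3}}\le \frac{u}{w},$$
which is the three-dimensional analogue of the estimate $k(\varphi)\le z/y$ used in Section 4. Since this inequality fails to be tight on a set of positive measure, averaging yields a strict inequality.

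Next, following the integration recipe of Theorem 4.1, I would bound the mean of $u/w$ over the ordered simplex of column lengths:
$$\overline{k}_3<\frac{\displaystyle\int_0^1 dw\int_0^w dv\int_0^v \frac{u}{w}\,du}{\displaystyle\int_0^1 dw\int_0^w dv\int_0^v du}.$$
The inner double integral in the numerator is $\int_0^w v^2/(2w)\,dv=w^2/6$, so the numerator equals $\int_0^1 w^2/6\,dw=1/18$, while the denominator is $\int_0^1 w^2/2\,dw=1/6$. The ratio is exactly $1/3$, which gives the theorem.

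The only nontrivial ingredient here is Proposition 5.1 itself, whose proof the paper defers to a computer-assisted check. Granting that proposition, the rest is routine: the middle column length $v$ enters only through the domain of integration, not through the integrand, so the triple integral collapses cleanly to $1/3$, and the three-dimensional bound follows by the same route as the two-dimensional bound of Section 4.
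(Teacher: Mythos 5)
Your proof is correct and follows essentially the same route as the paper: bound $k(\varphi)$ by $u/w$ via Proposition 5.1 and average $u/w$ over the ordered simplex of column lengths, obtaining the ratio $\frac{1/18}{1/6}=\frac 13$. (You also quietly correct the paper's slip $k(\varphi)=x_1/x_3$ to $k(\varphi)=\sqrt{x_1/x_3}$, which is the version for which the bound $\leqslant u/w$ actually follows from $x_1\leqslant u^2\leqslant w^2\leqslant x_3$.)
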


\begin{proof}
$$\overline{k}_3<\int_0^1 dw\int_0^w dv \int_0^v \frac uw\, du \left/
\int_0^1 dw\int_0^w dv \int_0^v du\right.=\frac 13\,.$$
\end{proof}

\begin{rem} Actual value of $\overline{k}_3$ is quite difficult to
compute. We have a rather coarse estimation:
$\overline{k}_3\approx 0.15$. \end{rem}

\vspace{1cm}

\end{document}